\numberwithin{equation}{section}
\newtheorem{thrm}{Theorem}[section]
\newtheorem{lemma}[thrm]{Lemma}
\newtheorem{prop}[thrm]{Proposition}
\newtheorem{cor}[thrm]{Corollary}
\newtheorem{dfn}[thrm]{Definition}
\newtheorem{rmrk}[thrm]{Remark}
\newtheorem{conv}[thrm]{Convention}
\def\gr{\nabla f}
\def\g{\nabla\varphi}
\def\bi{\nabla}
\newcommand{\vol}{\, Vol_{\eta}}
\newcommand{\Vol}{\, Vol_{\eta}}
\begin{document}

\begin{abstract}
We establish quaternionic contact (qc) versions of the so called Almost Schur Lemma, which give estimations of the qc scalar curvature on a compact qc manifold to be a constant in terms of the norm of the $[-1]$-component and the norm of the trace-free part of the $[3]$-component of the horizontal qc Ricci tensor and the torsion endomorphism, under certain positivity conditions. 
\end{abstract}

\keywords{Quaternionic contact structure, QC Lichnerowicz condition, QC Cordes estimate, P-function}
\subjclass[2010]{53C21, 58J60, 53C17, 35P15, 53C25}
\title[The Almost Schur Lemma in Quaternionic Contact Geometry]{The Almost Schur Lemma in Quaternionic Contact Geometry}
\date{\today }
\author{Stefan Ivanov}
\address[Stefan Ivanov]{University of Sofia, Faculty of Mathematics and
Informatics, blvd. James Bourchier 5, 1164, Sofia, Bulgaria}
\address{and Institute of Mathematics and Informatics, Bulgarian Academy of
Sciences} \email{ivanovsp@fmi.uni-sofia.bg}
\author{Alexander Petkov}
\address[Alexander Petkov]{University of Sofia, Faculty of Mathematics and
Informatics, blvd. James Bourchier 5, 1164, Sofia, Bulgaria}
\email{a\_petkov\_fmi@abv.bg}

\maketitle

\tableofcontents


\setcounter{tocdepth}{2}

\section{Introduction}
For a compact Riemannian manifold $(M^n,g)$ of dimension $n\geq 3$ the famous Schur lemma states that if $(M^n,g)$ is Einstein, then it has constant scalar curvature, $S=Const.$ The metric $g$ is said to be Einstein, if the Ricci tensor is proportional to the metric, $Ric=\frac{S}{n}g.$ A generalization of the Schur lemma is a recent result of De Lellis and Topping \cite{LT} that states as follows.
\begin{thrm}\cite[Almost Schur Lemma]{LT} Let $(M^n,g)$ be a compact Riemannian manifold of dimension $n\geq 3$ with non--negative Ricci tensor, $Ric\geq 0$. Then the following inequality holds
\begin{equation*}\label{ASL}
\int_M(S-\bar{S})^2 vol_g\leq\frac{4n(n-1)}{(n-2)^2}\int_M\left|Ric-\frac{S}{n}g\right|_g^2 vol_g,
\end{equation*}
where $\bar{S}$ means the average value of the scalar curvature $S$ of $g$.

The equality holds if and only if the manifold is Einstein.
\end{thrm}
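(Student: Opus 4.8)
The plan is to reduce the inequality to an application of the Bochner formula by introducing an auxiliary potential. Since $\int_M (S-\bar S)\,vol_g = 0$ by the definition of the average $\bar S$, the Poisson equation $\Delta f = S - \bar S$ admits a smooth solution $f$ on the compact manifold $M$ (here $\Delta = \mathrm{div}\,\nabla$). First I would test this equation against $S - \bar S$ itself and integrate by parts to obtain $\int_M (S-\bar S)^2\,vol_g = -\int_M \langle \nabla S, \nabla f\rangle\,vol_g$. The purpose of this step is to convert the left-hand side into an expression involving $\nabla S$, which can then be tied to the trace-free Ricci tensor $Ric_0 := Ric - \frac{S}{n}g$.

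The bridge to the Ricci tensor is the contracted second Bianchi identity $\mathrm{div}\,Ric = \frac12\,dS$. Taking the divergence of $Ric_0$ gives $\mathrm{div}\,Ric_0 = \frac{n-2}{2n}\,dS$, hence $\nabla S = \frac{2n}{n-2}\,\mathrm{div}\,Ric_0$. Substituting and integrating by parts once more moves the divergence off $Ric_0$; using that $Ric_0$ is trace-free, so that only the trace-free Hessian $(\nabla^2 f)_0$ survives the pairing, yields
\begin{equation*}
\int_M (S-\bar S)^2\,vol_g = \frac{2n}{n-2}\int_M \langle Ric_0, (\nabla^2 f)_0\rangle\,vol_g.
\end{equation*}
An application of the Cauchy--Schwarz inequality then bounds the right-hand side by $\frac{2n}{n-2}\,\|Ric_0\|_{L^2}\,\|(\nabla^2 f)_0\|_{L^2}$.

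It remains to control $\|(\nabla^2 f)_0\|_{L^2}$ by $\|S-\bar S\|_{L^2}$, and this is where the hypothesis $Ric\geq 0$ enters, through the Bochner formula $\frac12\Delta|\nabla f|^2 = |\nabla^2 f|^2 + \langle \nabla f, \nabla\Delta f\rangle + Ric(\nabla f,\nabla f)$. Integrating over $M$ annihilates the left-hand side, and integrating the middle term by parts gives $\int_M |\nabla^2 f|^2\,vol_g = \int_M(\Delta f)^2\,vol_g - \int_M Ric(\nabla f,\nabla f)\,vol_g \leq \int_M(\Delta f)^2\,vol_g$, the inequality being precisely where $Ric\geq 0$ is used. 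Decomposing $|\nabla^2 f|^2 = |(\nabla^2 f)_0|^2 + \frac1n(\Delta f)^2$ and recalling $\Delta f = S - \bar S$ gives $\int_M |(\nabla^2 f)_0|^2\,vol_g \leq \frac{n-1}{n}\int_M(S-\bar S)^2\,vol_g$. Feeding this back into the Cauchy--Schwarz bound and cancelling one factor of $\|S-\bar S\|_{L^2}$ produces exactly the sharp constant $\frac{4n(n-1)}{(n-2)^2}$.

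I expect the main obstacle to be twofold: first, carrying the sharp constant through the trace-free decompositions, where a single misallocated factor of $n$ would spoil the result; and second, the equality discussion. For equality I would trace back through the two inequalities: Cauchy--Schwarz forces $Ric_0$ to be pointwise proportional to $(\nabla^2 f)_0$, while equality in the Bochner step forces $Ric(\nabla f,\nabla f)\equiv 0$. Combining these with the observation that the Einstein case $Ric_0\equiv 0$ makes both sides vanish by the classical Schur lemma, one concludes that equality holds exactly for Einstein manifolds.
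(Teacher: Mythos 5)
This statement is quoted from De Lellis--Topping \cite{LT}; the paper gives no proof of it, only of its qc analogues (Theorem~\ref{thrm1}, Theorem~\ref{thrm2}), which explicitly ``follow the approach of \cite{LT}''. Your argument for the inequality is exactly that approach, and it is correct: solve $\Delta f=S-\bar S$, convert $\int(S-\bar S)^2$ into $\frac{2n}{n-2}\int\langle Ric_0,(\nabla^2f)_0\rangle$ via the contracted second Bianchi identity, and control $\|(\nabla^2f)_0\|_{L^2}$ by $\sqrt{(n-1)/n}\,\|S-\bar S\|_{L^2}$ through the Bochner formula, which is the only place $Ric\ge 0$ enters. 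The constants check out. The only cosmetic difference from the paper's template is that you use Cauchy--Schwarz and cancel a factor of $\|S-\bar S\|_{L^2}$, whereas the paper runs Young's inequality $2pq\le dp^2+d^{-1}q^2$ with an optimally chosen $d$; these are equivalent.

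The one genuine gap is the equality case, specifically the ``only if'' direction. You correctly extract the two consequences of equality --- $(\nabla^2f)_0=c\,Ric_0$ for a constant $c$ (which the two equalities force to be $\tfrac{2(n-1)}{n-2}$) and $Ric(\nabla f,\nabla f)\equiv 0$, hence $Ric(\nabla f,\cdot)\equiv 0$ by nonnegativity --- but the sentence ``one concludes that equality holds exactly for Einstein manifolds'' is not a deduction: nothing you have written yet shows that these two conditions force $Ric_0\equiv 0$ (a priori $f$ could be nonconstant with $\nabla^2 f$ proportional to a nonzero $Ric_0$). Closing this requires a further argument, e.g.\ combining the two identities to show $\nabla f$ vanishes, and it is precisely the part of \cite{LT} that does not transplant trivially; note that the paper's own qc versions stop short of an ``equality iff Einstein'' statement and only conclude a qc-pseudo-Einstein property after a conformal change. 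The ``if'' direction (Einstein $\Rightarrow$ both sides vanish) is of course immediate, as you say.
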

It is also shown in \cite{LT} that the positivity condition $Ric\geq 0$ assumed on the Ricci tensor is essential and can not be dropped. 

In the CR case there are known two positivity conditions written in terms of the Webster Ricci curvature and the pseudohermitian torsion; one is used for obtaining a lower bound of the first egenvalue of the sub-Laplacian (see e.g. \cite{Gr}), while the other one appears in the CR  Cordes type estimate (see \cite{CM}). A CR  version of the almost Schur lemma was first established in \cite{CSW} under the Greenleaf's positivity condition. Recently, a CR version of the almost Schur Lemma was presented in \cite{IP0} under the CR Cordes positivity condition which gives better estimate in the torsion-free (Sasakian)  case.

The aim of this note  is to present  a quaternionic contact (qc) version of the Almost Schur lemma. Quaternionic contact geometry is an example of sub--Riemannian geometry and any quaternionic contact manifold  admits a canonical connection $\nabla$, namely the Biquard connection, whose role in qc geometry is similar to that of Levi--Civita connection in Riemannian geometry and Tanaka--Webster connection in CR geometry. We present in terms of the Biquard connection  two versions of quaternionic contact  almost Schur lemma depending on the positivity assumption,  see Theorem~\ref{thrm1} and Theorem~\ref{thrm2} below.


\begin{conv}
\label{conven} \hfill\break\vspace{-15pt}

\begin{enumerate}[ a)]

\item We shall use $X,Y,Z,U$ to denote horizontal vector fields, i.e. $%
X,Y,Z,U\in H$.

\item $\{e_1,\dots,e_{4n}\}$ denotes a local orthonormal basis of the
horizontal space $H$.


\item The triple $(i,j,k)$ denotes any cyclic permutation of
$(1,2,3)$.

\item $s$ will be any number from the set $\{1,2,3\}$,
$s\in\{1,2,3\} $.
\end{enumerate}
\end{conv}

\textbf{Acknowledgments}  The research of both authors  is partially supported   by Contract DH/12/3/12.\allowbreak{}12.2017,
Contract 80-10-161/05.04.2021  with the Sofia University ``St. Kliment Ohridski'', 
and
the National Science Fund of Bulgaria, National Scientific Program ``VIHREN'', Project No. KP-06-DV-7.

\section{Quaternionic contact manifolds}

It is well known that the sphere at infinity of a non-compact
symmetric space of rank one carries a natural
Carnot-Carath\'eodory structure, see \cite{M,P}. A quaternionic
contact (qc) structure, \cite{Biq1}, appears naturally as the
conformal boundary at infinity of the quaternionic hyperbolic
space.

Quaternionic contact manifolds were introduced in \cite{Biq1}. We also refer
to \cite{IMV} and \cite{IV} for further results and background.

\subsection{Quaternionic contact structures and the Biquard connection}

\label{ss:Biq conn}

A quaternionic contact (qc) manifold $(M, g, \mathbb{Q})$ is a $4n+3$%
-dimensional manifold $M$ with a codimension three distribution $H$ equipped
with an $Sp(n)Sp(1)$-structure. Explicitly, $H$ is the kernel of a local
1-form $\eta=(\eta_1,\eta_2,\eta_3)$ with values in $\mathbb{R}^3$ together
with a compatible Riemannian metric $g$ and a rank-three bundle $\mathbb{Q}$
consisting of endomorphisms of $H$, locally generated by three almost complex
structures $I_1,I_2,I_3$ on $H$, satisfying the identities of the imaginary
unit quaternions. Thus, we have $I_1I_2=-I_2I_1=I_3, \quad
I_1I_2I_3=-id_{|_H}$, which are hermitian compatible with the metric $%
g(I_s.,I_s.)=g(.,.)$, and the following compatibility conditions
hold $2g(I_sX,Y)\ =\ d\eta_s(X,Y)$.

On a qc manifold of dimension $(4n+3)>7$ with a fixed metric $g$
on  $H$ there exists a canonical connection defined in
\cite{Biq1}. Biquard also showed that there is a unique connection $%
\nabla$ with torsion $T$ and a unique supplementary subspace $V$ to $H$ in $%
TM$, such that:
\begin{enumerate}[(i)]
\item $\nabla$ preserves the splitting $H\oplus V$ and the
$Sp(n)Sp(1)$-structure on $H$, i.e., $\nabla g=0, \nabla\sigma
\in\Gamma( \mathbb{Q})$ for a section
$\sigma\in\Gamma(\mathbb{Q})$, and its torsion on $H$ is given by
$T(X,Y)=-[X,Y]_{|V}$;
\item for $\xi\in V$, the endomorphism $T(\xi,.)_{|H}$ of $H$ lies in $%
(sp(n)\oplus sp(1))^{\bot}\subset gl(4n)$; \item the connection on
$V$ is induced by the natural identification $\varphi
$ of $V$ with $\mathbb Q$, 
$\nabla\varphi=0$.
\end{enumerate}


If the dimension of $M$ is at least eleven \cite{Biq1} also
described the supplementary \emph{vertical distribution} $V$,
which is (locally) generated by the so called \emph{Reeb vector
fields} $\{\xi_1,\xi_2,\xi_3\}$ determined by
\begin{equation}  \label{bi1}
\begin{aligned}
\eta_s(\xi_k)=\delta_{sk}, \qquad (\xi_s\lrcorner d\eta_s)_{|H}=0,
\qquad (\xi_s\lrcorner d\eta_k)_{|H}=-(\xi_k\lrcorner
d\eta_s)_{|H},
\end{aligned}
\end{equation}
where $\lrcorner$ denotes the interior multiplication.

If the dimension of $M $ is seven Duchemin shows in \cite{D} that if we
assume, in addition, the existence of Reeb vector fields as in \eqref{bi1},
then the Biquard result holds. \emph{Henceforth, by a qc structure in
dimension $7$ we shall mean a qc structure satisfying \eqref{bi1}}. This
implies the existence of the connection with properties (i), (ii) and (iii)
above.

The fundamental 2-forms $\omega_s$ of the quaternionic structure  are
defined by
\begin{equation*}
2\omega_{s|H}\ =\ \, d\eta_{s|H},\qquad \xi\lrcorner\omega_s=0,\quad \xi\in
V.
\end{equation*}
The torsion restricted to $H$ has the form $
T(X,Y)=-[X,Y]_{|V}=2\sum_{s=1}^3\omega_s(X,Y)\xi_s.
$

\subsection{Invariant decompositions}

Any endomorphism $\Psi$ of $H$ can be decomposed with respect to the
quaternionic structure $(\mathbb{Q},g)$ uniquely into four $Sp(n)$-invariant
parts
$\Psi=\Psi^{+++}+\Psi^{+--}+\Psi^{-+-}+\Psi^{--+},$ 
where $\Psi^{+++}$ commutes with all three $I_i$, $\Psi^{+--}$ commutes with
$I_1$ and anti-commutes with the others two, etc. The two $Sp(n)Sp(1)$-invariant components are given by $$\Psi_{[3]}=\Psi^{+++},\quad
\Psi_{[-1]}=\Psi^{+--}+\Psi^{-+-}+\Psi^{--+}$$ with the following characterizing equations
\begin{eqnarray*} 
\Psi&=&\Psi_{[3]}\iff 3\Psi+I_1\Psi I_1+I_2\Psi I_2+I_3\Psi I_3=0,\\
\Psi&=&\Psi_{[-1]}\iff \Psi-I_1\Psi I_1-I_2\Psi I_2-I_3\Psi I_3=0.
\end{eqnarray*}
These are the projections on the eigenspaces of the Casimir
operator
$$\Upsilon =\ I_1\otimes I_1\ +\ I_2\otimes I_2\ +\ I_3\otimes I_3,$$
corresponding, respectively, to the eigenvalues $3$ and $-1$, see \cite{CSal}%
. Note here that each of the three 2-forms $\omega_s$ belongs to the
[-1]-component, $\omega_s=\omega_{s[-1]}$, and constitute a basis of the Lie
algebra $sp(1)$.

If $n=1$ then the space of symmetric endomorphisms commuting with all $I_s$
is 1-dimensional, i.e., the [3]-component of any symmetric endomorphism $\Psi
$ on $H$ is proportional to the identity, $\Psi_{[3]}=-%
\frac{tr\Psi}{4}Id_{|H}$.

\subsection{The torsion tensor}

The torsion endomorphism $T_{\xi }=T(\xi ,\cdot ):H\rightarrow H,\quad \xi
\in V$ will be decomposed into its symmetric part $T_{\xi }^{0}$ and
skew-symmetric part $b_{\xi }, T_{\xi }=T_{\xi }^{0}+b_{\xi }$. Biquard
showed in \cite{Biq1} that the torsion $T_{\xi }$ is completely trace-free, $%
tr\,T_{\xi }=tr\,(T_{\xi }\circ I_{s})=0$, its symmetric part has the
properties $T_{\xi _{i}}^{0}I_{i}=-I_{i}T_{\xi _{i}}^{0},\quad I_{2}(T_{\xi
_{2}}^{0})^{+--}=I_{1}(T_{\xi _{1}}^{0})^{-+-},\quad I_{3}(T_{\xi
_{3}}^{0})^{-+-}=I_{2}(T_{\xi _{2}}^{0})^{--+},\quad I_{1}(T_{\xi
_{1}}^{0})^{--+}=I_{3}(T_{\xi _{3}}^{0})^{+--}$.
The skew-symmetric part can be represented as $b_{\xi _{i}}=I_{i}U$, where $U
$ is a traceless symmetric (1,1)-tensor on $H$ which commutes with $%
I_{1},I_{2},I_{3}$. Therefore we have $T_{\xi _{i}}=T_{\xi _{i}}^{0}+I_{i}U$%
. When $n=1$ the tensor $U$ vanishes identically, $U=0$, and the
torsion is a symmetric tensor, $T_{\xi }=T_{\xi }^{0}.$

The two $Sp(n)Sp(1)$-invariant trace-free symmetric 2-tensors on $H$
\begin{equation} \label{Tcompnts}
T^0(X,Y)= g((T_{\xi_1}^{0}I_1+T_{\xi_2}^{0}I_2+T_{ \xi_3}^{0}I_3)X,Y) \
\text{ and }\ U(X,Y) =g(UX,Y)
\end{equation}
were introduced in \cite{IMV} and enjoy the properties
\begin{equation}  \label{propt}
\begin{aligned} T^0(X,Y)+T^0(I_1X,I_1Y)+T^0(I_2X,I_2Y)+T^0(I_3X,I_3Y)=0, \\
U(X,Y)=U(I_1X,I_1Y)=U(I_2X,I_2Y)=U(I_3X,I_3Y). \end{aligned}
\end{equation}
From \cite[Proposition~2.3]{IV} we have
\begin{equation}  \label{need}
4T^0(\xi_s,I_sX,Y)=T^0(X,Y)-T^0(I_sX,I_sY),
\end{equation}
hence, taking into account \eqref{need} it follows
\begin{multline}  \label{need1}
T(\xi_s,I_sX,Y)=T^0(\xi_s,I_sX,Y)+g(I_sUI_sX,Y) 
=\frac14\Big[T^0(X,Y)-T^0(I_sX,I_sY)\Big]-U(X,Y).
\end{multline}
\subsection{Torsion and curvature}Let $R=[\nabla,\nabla]-\nabla_{[\ ,\ ]}$ be the curvature tensor of $\nabla$
and the dimension is $4n+3$. We denote the curvature tensor of type (0,4)
and the torsion tensor of type (0,3) by the same letter, $%
R(A,B,C,D):=g(R(A,B)C,D),\quad T(A,B,C):=g(T(A,B),C)$, $A,B,C,D
\in \Gamma(TM)$. The  Ricci tensor, the normalized
scalar curvature and the  Ricci $2$-forms  of the Biquard connection, called \emph{qc-Ricci tensor} $Ric$,
\emph{normalized qc-scalar curvature} $S$ and \emph{qc-Ricci forms} $\rho_s$, respectively, are
defined by
\begin{equation}  \label{qscs}
\begin{aligned}
Ric(A,B)=R(e_a,A,B,e_a),\quad S=\frac1{ 8n(n+2)}Ric(e_a,e_a),\quad
\rho_s(A,B)=\frac1{4n}R(A,B,e_a,I_se_a). 
\end{aligned}
\end{equation}

\begin{dfn}  A qc structure is said to be qc-Einstein if the horizontal qc-Ricci tensor is a scalar multiple of the metric,
$$Ric(X,Y)=2(n+2)Sg(X,Y).$$
\end{dfn}
The horizontal qc-Ricci tensor and the horizontal qc-Ricci 2-forms can be expressed in terms of the
torsion of the Biquard connection \cite{IMV} (see also
\cite{IV}). We collect the necessary facts from
\cite[Theorem~1.3, Theorem~3.12, Corollary~3.14, Proposition~4.3 and
Proposition~4.4]{IMV} with slight modification presented in
\cite{IV}.

\begin{thrm}\cite{IMV}\label{sixtyseven} On a $(4n+3)$-dimensional qc manifold $(M,\eta,\mathbb{Q})$ with a normalized qc scalar curvature $S$ we have the following relations
\begin{equation} \label{sixtyfour}
\begin{aligned}
& Ric(X,Y)  =(2n+2)T^0(X,Y)+(4n+10)U(X,Y)+2(n+2)Sg(X,Y),\\
& \rho_s(X,I_sY) = -\frac12\Bigl[T^0(X,Y)+T^0(I_sX,I_sY)\Bigr]-2U(X,Y)-Sg(X,Y),\\
& T(\xi_{i},\xi_{j}) =-S\xi_{k}-[\xi_{i},\xi_{j}]_{|H}, \qquad S  = -g(T(\xi_1,\xi_2),\xi_3).\\
 \end{aligned}
\end{equation}
For $n=1$ the above formulas hold with $U=0$.

\noindent The qc-Einstein condition
is equivalent to the vanishing of the torsion endomorphism of the
Biquard connection. In this case $S$ is constant and  the vertical distribution
is integrable (see \cite{IMV1} for $n=1$).
\end{thrm}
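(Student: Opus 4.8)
The plan is to derive every identity in \eqref{sixtyfour} from three structural inputs: that the Biquard connection $\nabla$ is metric and preserves the bundle $\mathbb{Q}$, that its torsion has the explicit normal form recorded in Section~2.3, and the first Bianchi identity for a connection with torsion,
\[
\mathfrak{S}_{A,B,C}\,R(A,B,C,D)=\mathfrak{S}_{A,B,C}\Big[g\big((\nabla_AT)(B,C),D\big)+g\big(T(T(A,B),C),D\big)\Big],
\]
where $\mathfrak{S}$ denotes the cyclic sum over $A,B,C$. Because $\nabla g=0$ and $\nabla$ preserves the $Sp(n)Sp(1)$-structure, the endomorphism $R(A,B)$ restricted to $H$ takes values in $sp(n)\oplus sp(1)$; this is the feature that lets the three forms $\rho_s$ capture the entire $sp(1)$-part of the curvature and supplies the symmetry relations used repeatedly below.

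Next I would substitute the explicit torsion. For horizontal $X,Y$ one has $T(X,Y)=2\sum_s\omega_s(X,Y)\xi_s\in V$, while for vertical directions $T(\xi_s,X)=T_{\xi_s}X=(T^0_{\xi_s}+I_sU)X$. Feeding $A,B,C\in H$ and $D=e_a$ into the Bianchi identity and contracting yields the horizontal qc-Ricci tensor: the quadratic term $g(T(T(A,B),C),D)$ produces, after using $T(\xi_s,\cdot)=T^0_{\xi_s}+I_sU$, the $T^0$- and $U$-contributions, while the $(\nabla T)$-terms reorganize---via the trace-free identities $tr\,T_\xi=tr(T_\xi\circ I_s)=0$---into the scalar term $2(n+2)Sg$. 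Contracting instead against $I_se_a$ produces the Ricci $2$-forms $\rho_s$ and hence the second line. The decisive bookkeeping device throughout is the $Sp(n)Sp(1)$-invariant splitting $\Psi=\Psi_{[3]}+\Psi_{[-1]}$: since $U$ commutes with all $I_s$ it is purely $[3]$, whereas $T^0$ satisfies $T^0(X,Y)+\sum_sT^0(I_sX,I_sY)=0$ and is purely $[-1]$, so after applying the Casimir projection $\Upsilon$ their coefficients decouple and can be read off one at a time.

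For the vertical relations in the third line I would combine the definition $S=\frac1{8n(n+2)}Ric(e_a,e_a)$ with the Reeb normalization \eqref{bi1} and $T(\xi_i,\xi_j)=\nabla_{\xi_i}\xi_j-\nabla_{\xi_j}\xi_i-[\xi_i,\xi_j]$. Tracing the curvature in the mixed directions and matching against the horizontal data already computed pins down the vertical component of $T(\xi_i,\xi_j)$ to be $-S\xi_k$, giving $T(\xi_i,\xi_j)=-S\xi_k-[\xi_i,\xi_j]_{|H}$ and, by pairing with $\xi_3$, the normalization $S=-g(T(\xi_1,\xi_2),\xi_3)$. Finally, for the qc-Einstein characterization I would compare $Ric=2(n+2)Sg$ with the first identity to get $(2n+2)T^0+(4n+10)U=0$; since $T^0$ and $U$ lie in the complementary components $[-1]$ and $[3]$ and both coefficients are strictly positive for $n\geq1$, both tensors vanish, i.e. the torsion endomorphism is zero. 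That $S$ is then constant I would obtain from a contracted second Bianchi identity: with $T^0=U=0$ the forms $\rho_s$ reduce to multiples of $\omega_s$, and computing their divergence forces $dS=0$; integrability of $V$ is immediate from $T(\xi_i,\xi_j)=-S\xi_k$ having no horizontal part.

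I expect the main obstacle to be the Bianchi bookkeeping in the mixed horizontal/vertical case and, above all, the verification that the $(\nabla T)$-terms assemble precisely into the scalar curvature $S$ with the stated coefficients; the cleanest route is to project every contracted identity onto its $[3]$ and $[-1]$ parts so that $T^0$, $U$ and $S$ separate, and to treat the constancy of $S$ independently through the second Bianchi identity rather than by direct computation, the case $n=1$ (where $U\equiv0$) needing the separate argument of \cite{IMV1}.
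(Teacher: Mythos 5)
This theorem is not proved in the paper at all: it is imported verbatim from \cite{IMV} (Theorem~1.3, Theorem~3.12, Corollary~3.14, Propositions~4.3--4.4 there, as modified in \cite{IV}), so there is no internal proof to compare against. Measured against the actual argument in \cite{IMV}, your overall strategy --- first Bianchi identity with torsion, substitution of the normal form $T(X,Y)=2\sum_s\omega_s(X,Y)\xi_s$ and $T_{\xi_s}=T^0_{\xi_s}+I_sU$, contraction, and separation of coefficients via the $[3]/[-1]$ Casimir decomposition --- is the right one, and your derivation of the qc-Einstein equivalence ($T^0$ and $U$ sit in complementary invariant components with positive coefficients, hence both vanish) and of $dS=0$ for $n>1$ from the contracted second Bianchi identity \eqref{Bi2} are correct, including your caveat that the $(n-1)$ factor kills the argument at $n=1$.

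There are, however, two genuine gaps. First, your argument for the integrability of $V$ is circular. The identity $T(\xi_i,\xi_j)=-S\xi_k-[\xi_i,\xi_j]_{|H}$ holds on \emph{every} qc manifold: since $\nabla$ preserves $V$, the horizontal part of $T(\xi_i,\xi_j)=\nabla_{\xi_i}\xi_j-\nabla_{\xi_j}\xi_i-[\xi_i,\xi_j]$ is automatically $-[\xi_i,\xi_j]_{|H}$, so the formula carries no information about whether $[\xi_i,\xi_j]_{|H}$ vanishes. What is actually needed (and is a substantial computation in \cite{IMV}) is an independent expression for $g(T(\xi_i,\xi_j),X)$ in terms of the torsion endomorphism and its covariant derivatives, from which $[\xi_i,\xi_j]_{|H}=0$ follows once $T^0=U=0$; saying integrability is ``immediate'' from the displayed identity would not survive refereeing. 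Second, the central quantitative content of the first two lines of \eqref{sixtyfour} --- that the coefficients are exactly $2n+2$, $4n+10$, $-\tfrac12$, $-2$ --- is precisely the part you wave through with ``the $(\nabla T)$-terms reorganize into $2(n+2)Sg$.'' The trace part is automatic from the definition $S=\tfrac1{8n(n+2)}Ric(e_a,e_a)$; the real work is computing the $sp(1)$-part of the curvature (equivalently $\rho_s$) and the $[-1]$- and trace-free $[3]$-components of $Ric$ from the Bianchi sums, which your sketch does not carry out and which is where the original proof spends most of its effort.
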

Any 3-Sasakian manifold has zero torsion endomorphism, and the
converse is true if in addition the qc-scalar  curvature  is a
positive constant \cite{IMV}.

The tensor $T^0$ determines the traceless $[-1]$-component of the horizontal qc-Ricci tensor, while the tensor $U$ determines the traceless part of the $[3]$-component of the horizontal qc-Ricci tensor \cite{IV2} (see also \cite{IMV,IV}):
\begin{equation}\label{Riccomp}
T^0=\frac{1}{2n+2}Ric_{[-1]},\qquad U=\frac{1}{4n+10}Ric_{[3][0]}.
\end{equation}
A weaker condition than the qc-Einstein one is contained in the following 
\begin{dfn}\cite[Definition~6.1]{IMV} Let $(M,g,\mathbb{Q})$  be a quaternionic contact manifold. We call M qc-pseudo-Einstein if the trace-free part of the
[3]-component of the qc-Ricci tensor vanishes, $U=0$. In dimension seven any qc manifold is qc-pseudo-Einstein.
\end{dfn}

We also give the following
\begin{dfn}
Let $(M,g,\mathbb{Q})$  be a quaternionic contact manifold. We call M qc-nearly-Einstein if the
[-1]-component of the qc-Ricci tensor vanishes, $T^0=0$.
\end{dfn}
We note that qc-nearly-Einstein manifolds are characterized by the condition that the
almost contact structure on the corresponding twistor space is normal, see \cite{DIM}.
Examples of qc-nearly-Einstein manifolds are provided by sub-Riemannian manifolds with transverse symmetry of qc type  as well as by Riemannian foliations with totally geodesic fibres of qc type (see \cite{BR1,BR2} and references therein).

We  use the contracted second Bianchi identity,  established in \cite{IMV},   in the form presented in \cite{IV,IV2}:
\begin{equation}\label{Bi2}
(n-1)(\bi_{e_a}T^0)(e_a,X)+2(n+2)(\bi_{e_a}U)(e_a,X)-(n-1)(2n+1)dS(X)=0.
\end{equation}
Clearly, the contracted second Bianchi identity \eqref{Bi2} shows that for $n>1$ any qc-Einstein space has constant qc scalar curvature but for qc-pseudo-Einstein and qc-nearly-Einstein spaces the qc scalar curvature could  not be a constant.

\subsection{The Ricci identities}

We  use repeatedly the Ricci identities of order two and three,
see also \cite{IV}. Let $f$ be a smooth function on the qc
manifold $M$ with horizontal gradient $\nabla f$ defined by
$g(\nabla f,X)=df(X)$. The sub-Laplacian of $f$ is, by definition, $ \triangle
f=-\sum_{a=1}^{4n}\nabla^2f(e_a,e_a)$. We have the following Ricci
identities (see e.g. \cite{IMV,IV2})
\begin{equation}  \label{rid}
\begin{aligned}&  \nabla^2f
(X,Y)-\nabla^2f(Y,X)=-2\sum_{s=1}^3\omega_s(X,Y)df(\xi_s), \\ &
\nabla^2f (X,\xi_s)-\nabla^2f(\xi_s,X)=T(\xi_s,X,\nabla f),\\ &
\nabla^3 f (X,Y,Z)-\nabla^3 f(Y,X,Z)=-R(X,Y,Z,\nabla f) -
2\sum_{s=1}^3 \omega_s(X,Y)\nabla^2f (\xi_s,Z).
\end{aligned}
\end{equation}
In view of \eqref{rid} we have the decompositions
\begin{equation}  \label{boh2}
\begin{aligned}  &(\bi^2f)_{[3][0]}(X,Y)=(\bi^2f)_{[3]}(X,Y) +\frac1{4n}\triangle fg(X,Y),\\&|(\nabla^2f)_{[3][0]}|^2=|(\nabla^2f)_{[3]}|^2-\frac{1}{4n}(\Delta f)^2,\\&(\bi^2f)_{[-1]}(X,Y)= (\nabla^2f)_{[-1][sym]}
(X,Y)+(\nabla^2f)_{[-1][a]}(X,Y)\\&\qquad\qquad\qquad\quad=(\nabla^2f)_{[-1][sym]}(X,Y)-\sum_{s=1}^3\omega_s(X,Y)df(\xi_s),\\ &|(\bi^2f)_{[-1]}|^2=|(\nabla^2f)_{[-1][sym]}|^2+4n\sum_{s=1}^3df(\xi_s)^2.
\end{aligned}
\end{equation}
We also need  the qc-Bochner formula \cite[(4.1)]{IPV1}
\begin{multline}  \label{bohS}
-\frac12\triangle |\nabla f|^2=|\nabla^2f|^2-g\left (\nabla
(\triangle f), \nabla f \right )+2(n+2)S|\nabla
f|^2+2(n+2)T^0(\nabla f,\nabla f) \\ +2(2n+2)U(\nabla f,\nabla f)+
4\sum_{s=1}^3\nabla^2f(\xi_s,I_s\nabla f). 
\end{multline}
Let $(M, g,\mathbb{Q})$ be a qc manifold of dimension $4n+3\geq 7$. For a
fixed local 1-form $\eta$ and a fixed $s\in \{1,2,3\}$ the form
$Vol_{\eta}=\eta_1\wedge\eta_2\wedge\eta_3\wedge\omega_s^{2n}$
is a locally defined volume form. Note that $Vol_{\eta}$ is independent of $%
s $ as well as the local one forms $\eta_1,\eta_2,\eta_3 $. Hence, it is a
globally defined volume form. The (horizontal) divergence of a horizontal
vector field/one-form $\sigma\in\Lambda^1\, (H)$, defined by $\nabla^*
\sigma =-tr|_{H}\nabla\sigma= -\nabla \sigma(e_a,e_a),$ 
 supplies the integration by parts formula \cite{IMV}, see also \cite{Wei},
\begin{equation*}  \label{div}
\int_M (\nabla^*\sigma)\,\, Vol_{\eta}\ =\ 0.
\end{equation*}

\subsection{The $P-$form}
We recall the definition of the P-form from \cite{IPV3}. Let
$(M,g,\mathbb{Q})$ be a compact quaternionic contact manifold of
dimension $4n+3$.

For a smooth function $f$ on $M$ the $P-$form $P_f $ on $M$ is defined  by \cite{IPV3}
\begin{equation}
\begin{aligned} P_f(X) =&\nabla ^{3}f(X,e_{a},e_{a})+\sum_{t=1}^{3}\nabla
^{3}f(I_{t}X,e_{a},I_{t}e_{a})-4nSdf(X)+4nT^{0}(X,\nabla f)\\ &\hskip2.8in
-\frac{8n(n-2)}{n-1}U(X,\nabla f), \quad \text{if $n>1$},\\ P_f(X) =&\nabla
^{3}f(X,e_{a},e_{a})+\sum_{t=1}^{3}\nabla
^{3}f(I_{t}X,e_{a},I_{t}e_{a})-4Sdf(X)+4T^{0}(X,\nabla f),\ \text{if $n=1$}.
\end{aligned}  \label{e:def P}
\end{equation}
We say that the $P-$function of $f$ is non-negative if the integral of $P_f(\gr)$  exists and is non-positive:
\begin{equation}  \label{e:non-negative Paneitz}
 -\int_M P_f(\nabla f)\, Vol_{\eta}\geq 0.
\end{equation}
If \eqref{e:non-negative Paneitz} holds for any smooth functon of
compact support we say that the $P$-function is non-negative. It
turns out that the $P$-function is non-negative on any compact qc
manifold of dimension at least eleven \cite{IPV3}. Indeed, \cite[Theorem~3.3]{IPV3} asserts that on a compact qc manifold af dimension bigger than seven, the next formula holds
\begin{equation}\label{p-form}
\int_MP_f(\nabla f)\,Vol_{\eta}=-\frac{4n}{n-1}\int_M|(\nabla^2f)_{[3][0]}|^2\, Vol_{\eta}.
\end{equation}




We  recall also the following integral identities,
\begin{multline}\label{iden1}
\int_M\sum_{s=1}^3\nabla^2f(\xi_s,I_s\nabla f)\, Vol_{\eta} \\
=\int_M\Big[\frac3{4n}|(\nabla^2f)_{[3]}|^2-\frac1{4n}|(\nabla^2f)_{[-1]}|^2-\frac{n+2}{2n}T^0(\gr,\gr) -\frac32S|\gr|^2\Big]\, Vol_{\eta},
\end{multline}
 proved in \cite[Lemma~3.3]{IPV1}, and 
\begin{multline}\label{new}
\int_M|(\bi^2f)_{[-1][a]}|^2\Vol=4n\int_M\sum_{s=1}^3(df(\xi_s))^2\Vol=\int_M\Big[ \frac1{4n}P_f(\gr) +\frac1{4n}(\triangle f)^2\\+S|\gr|^2-T^0(\gr,\gr)+\frac{2n-4}{n-1}U(\gr,\gr)\Big]\Vol,
\end{multline}
established in \cite[formula (4.11)]{IPV2}.

\begin{prop}For any smooth function $f$ on a compact qc manifold $(M,g,\mathbb{Q})$  of dimension 4n+3 bigger than seven we have
\begin{equation}\label{p-functf}
0=\int_M\Big[\frac{2n+1}{2n}(\triangle f)^2
-|(\nabla^2f)_{[-1][sym]}|^2-\frac{n+2}{n-1}|(\nabla^2f)_{[3][0]}|^2-(2n+1)Lic(\gr,\gr)\Big]\, Vol_{\eta},
\end{equation}
where
\begin{equation}\label{lli}Lic(\gr,\gr)=S|\gr|^2+\frac{2n+3}{2n+1}T^0(\gr,\gr)+\frac{2(n+2)(2n-1)}{(2n+1)(n-1)}U(\gr,\gr),
\end{equation}
and also
\begin{equation}\label{p-functC}
0=\int_M\Big[\frac{4n+3}{4n}(\triangle f)^2-|(\nabla^2f)_{[-1]}|^2 -\frac{n+3}{n-1}|(\nabla^2f)_{[3][0]}|^2-2nCor(\gr,\gr)\Big]\, Vol_{\eta},
\end{equation}
where
\begin{equation}\label{crr} Cor(\gr,\gr)= S|\gr|^2+\frac{n+2}{n}T^0(\gr,\gr)+\frac{2(n+1)}{n-1}U(\gr,\gr).
\end{equation}
\end{prop}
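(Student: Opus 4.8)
The plan is to obtain both identities as linear integral consequences of the qc-Bochner formula \eqref{bohS}. First I would integrate \eqref{bohS} over the compact manifold $M$ against $Vol_\eta$, using the divergence formula $\int_M(\nabla^*\sigma)\,Vol_\eta=0$ twice: it kills $\int_M\triangle|\nabla f|^2\,Vol_\eta=0$ on the left, and, applied to $\sigma=(\triangle f)\,df$, it converts $-\int_M g(\nabla(\triangle f),\nabla f)\,Vol_\eta$ into $-\int_M(\triangle f)^2\,Vol_\eta$. After this the only non-elementary term remaining is $4\int_M\sum_{s=1}^3\nabla^2f(\xi_s,I_s\nabla f)\,Vol_\eta$, which I would replace by its value from \eqref{iden1}.

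Collecting the coefficients of $S|\nabla f|^2$, $T^0(\nabla f,\nabla f)$ and $U(\nabla f,\nabla f)$ (for instance $2(n+2)-6=2(n-1)$ for $S$, and $2(n+2)-\tfrac{2(n+2)}{n}=\tfrac{2(n+2)(n-1)}{n}$ for $T^0$, while $U$ gets no contribution from \eqref{iden1}) and writing $|\nabla^2f|^2=|(\nabla^2f)_{[3]}|^2+|(\nabla^2f)_{[-1]}|^2$, I expect to arrive at the master identity
\begin{multline*}
0=\int_M\Big[\tfrac{n+3}{n}|(\nabla^2f)_{[3]}|^2+\tfrac{n-1}{n}|(\nabla^2f)_{[-1]}|^2-(\triangle f)^2\\
+2(n-1)S|\nabla f|^2+\tfrac{2(n+2)(n-1)}{n}T^0(\nabla f,\nabla f)+4(n+1)U(\nabla f,\nabla f)\Big]\,Vol_\eta.
\end{multline*}
Using $|(\nabla^2f)_{[3]}|^2=|(\nabla^2f)_{[3][0]}|^2+\tfrac1{4n}(\triangle f)^2$ from \eqref{boh2} and then multiplying by $-\tfrac{n}{n-1}$ should give \eqref{p-functC} at once: the $|(\nabla^2f)_{[-1]}|^2$-coefficient becomes $-1$, that of $|(\nabla^2f)_{[3][0]}|^2$ becomes $-\tfrac{n+3}{n-1}$, and the factorization $4n^2-n-3=(4n+3)(n-1)$ turns the $(\triangle f)^2$-coefficient into $\tfrac{4n+3}{4n}$, matching $2n\,Cor$ from \eqref{crr}.

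To reach \eqref{p-functf} I would instead split the $[-1]$-norm in the master identity via $|(\nabla^2f)_{[-1]}|^2=|(\nabla^2f)_{[-1][sym]}|^2+4n\sum_s df(\xi_s)^2$ (last line of \eqref{boh2}) and then eliminate the resulting $\sum_s df(\xi_s)^2$ using \eqref{new}, into which I first feed \eqref{p-form} to rewrite $\int_M P_f(\nabla f)\,Vol_\eta=-\tfrac{4n}{n-1}\int_M|(\nabla^2f)_{[3][0]}|^2\,Vol_\eta$. This expresses $4(n-1)\int_M\sum_s df(\xi_s)^2\,Vol_\eta$ entirely through $|(\nabla^2f)_{[3][0]}|^2,\ (\triangle f)^2,\ S|\nabla f|^2,\ T^0,\ U$; substituting and again rescaling by $-\tfrac{n}{n-1}$ should yield \eqref{p-functf} with $Lic$ as in \eqref{lli}.

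Conceptually the statement is a bookkeeping exercise: everything is a linear combination of \eqref{bohS}, \eqref{iden1}, the Hessian decompositions \eqref{boh2}, and, for the first identity only, \eqref{new} and \eqref{p-form}. The main obstacle is therefore purely computational, namely tracking the rational coefficients through the substitutions and confirming the three factorizations $2n^2-n-1=(2n+1)(n-1)$, $4n^2-n-3=(4n+3)(n-1)$ and $2n^2+3n-2=(2n-1)(n+2)$ that collapse the $(\triangle f)^2$, $S$, $T^0$ and $U$ coefficients to the stated forms. The recurring factor $n-1$ in the denominators of \eqref{lli}, \eqref{crr} and in the rescaling makes transparent why $n>1$ (equivalently $\dim M>7$) is assumed, precisely where \eqref{p-form} enters.
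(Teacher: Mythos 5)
Your proposal is correct and follows essentially the same route as the paper: integrate the qc-Bochner formula \eqref{bohS}, substitute \eqref{iden1} and the Hessian decompositions \eqref{boh2} to reach the identity \eqref{p-functC} (your ``master identity'' is exactly the paper's last line of \eqref{n1} before rescaling by $-\tfrac{n}{n-1}$), and then obtain \eqref{p-functf} by splitting $|(\nabla^2f)_{[-1]}|^2$ and eliminating $\sum_s df(\xi_s)^2$ via \eqref{new} together with \eqref{p-form}. All the coefficient computations and factorizations you indicate check out against the paper's.
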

\begin{proof}
Integrating the qc Bochner formula \eqref{bohS} over the compact $M$ and applying \eqref{sixtyfour} 
 and \eqref{iden1}, we get
\begin{multline}\label{n1}
0=\int_M\Big[|\nabla^2f|^2 - (\triangle f)^2+Ric(\gr,\gr)+2T^0(\gr,\gr)\\-6U(\gr,\gr)+4\sum_{s=1}^3\nabla^2f(\xi_s,I_s\nabla f)\Big]\,Vol_{\eta}\\=
\int_M\Big[|\nabla^2f|^2 - (\triangle f)^2+Ric(\gr,\gr)+2T^0(\gr,\gr)-6U(\gr,\gr)\Big]\, Vol_{\eta}\\+\int_M\Big[\frac3{n}|(\nabla^2f)_{[3]}|^2-\frac1{n}|(\nabla^2f)_{[-1]}|^2-\frac{2(n+2)}{n}T^0(\gr,\gr) -6S|\gr|^2\Big]\, Vol_{\eta}\\
=\int_M\Big[|(\nabla^2f)_{[3][0]}|^2+\frac1{4n}(\triangle f)^2+|(\nabla^2f)_{[-1]}|^2 - (\triangle f)^2+\frac3{n}|(\nabla^2f)_{[3][0]}|^2+\frac{3}{4n^2}(\triangle f)^2-\frac1{n}|(\nabla^2f)_{[-1]}|^2\Big]\, Vol_{\eta}\\+\int_M\Big[Ric(\gr,\gr)-\frac4nT^0(\gr,\gr)-6U(\gr,\gr)\ -6S|\gr|^2\Big]\, Vol_{\eta}\\
=\int_M\Big[\frac{n+3}{n}|(\nabla^2f)_{[3][0]}|^2-\frac{(n-1)(4n+3)}{4n^2}(\triangle f)^2+\frac{n-1}{n}|(\nabla^2f)_{[-1]}|^2 +2(n-1)Cor(\gr,\gr)\Big]\, \Vol,
\end{multline}
which proves \eqref{p-functC}.

The substitution of the fourth equality of  \eqref{boh2} and \eqref{new} into \eqref{p-functC} leads to
\begin{multline*}
0=\int_M\Big[\frac{4n+3}{4n}(\triangle f)^2-|(\nabla^2f)_{[-1]}|^2 -\frac{n+3}{n-1}|(\nabla^2f)_{[3][0]}|^2-2nCor(\gr,\gr)\Big]\, Vol_{\eta}\\=\int_M\Big\{\frac{4n+3}{4n}(\triangle f)^2 -\frac{n+3}{n-1}|(\nabla^2f)_{[3][0]}|^2
-|(\nabla^2f)_{[-1][sym]}|^2-\frac1{4n}\Big[P_f(\gr)+(\triangle f)^2\Big]\\-\Big[S|\gr|^2-T^0(\gr,\gr)+\frac{2n-4}{n-1}U(\gr,\gr)\ \Big]\\
-2n\Big[\frac{n+2}{n}T^0(\gr,\gr)+\frac{2(n+1)}{n-1}U(\gr,\gr)+S|\gr|^2\Big] \Big\}\, Vol_{\eta},
\end{multline*}
which proves
 \eqref{p-functf} in view of \eqref{p-form}.
\end{proof}
\begin{rmrk}
Note that the expression in the formula \eqref{lli}	appears in the qc Lichnerowicz-type positivity condition used to find a lower bound of the first eigenvalue of the sub-Laplacian  (see \cite[formula (1.1)]{IPV1} and \cite[formula (1.2)]{IPV3}). On the other  hand,  the expression in \eqref{crr} appears in the qc Cordes-type a
priori inequality between the (horizontal) Hessian and the sub-Laplacian of a function, derived in \cite[formula (1.2)]{IPV1}.
\end{rmrk}
Indeed, in view of \eqref{boh2}, the equality \eqref{p-functC} can be written in the form
\begin{multline}\label{p-functC1}
\frac{n+1}n\int_M(\triangle f)^2\Vol=\int_M|\bi f|^2\Vol+\int_M\Big[\frac4{n-1}|(\nabla^2f)_{[3][0]}|^2+2nCor(\gr,\gr)\Big]\Vol.
\end{multline}
In this way we recovered the next result from \cite{IPV1}, supplying more information in the equality case:
\begin{thrm}\cite{IPV1}
On a compact qc manifold of dimension bigger than seven one has the inequality
\begin{equation}\label{ccc}
\int_M(\triangle f)^2\Vol\ge \frac{n}{n+1}\int_M|\bi f|^2\Vol+\frac{2n^2}{n+1}\int_M Cor(\gr,\gr)\Vol.
\end{equation}
The equality in \eqref{ccc} can be achieved only for functions with vanishing trace-free part of the $[3]$-component of the horizontal Hessian.
\end{thrm}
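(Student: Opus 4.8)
The plan is to obtain \eqref{ccc} as an immediate consequence of the integral identity \eqref{p-functC1}, which has already been derived. Rewriting \eqref{p-functC1} with the three terms on the right separated,
\[
\frac{n+1}{n}\int_M(\triangle f)^2\Vol=\int_M|\bi f|^2\Vol+\frac{4}{n-1}\int_M|(\nabla^2f)_{[3][0]}|^2\Vol+2n\int_M Cor(\gr,\gr)\Vol,
\]
the key observation is that the dimension restriction $4n+3>7$ forces $n>1$, hence $n\geq 2$, so the coefficient $\frac{4}{n-1}$ is strictly positive. Therefore the middle integral on the right-hand side is non-negative, being a positive multiple of the integral of the squared norm $|(\nabla^2f)_{[3][0]}|^2$.

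First I would discard this non-negative middle term to obtain
\[
\frac{n+1}{n}\int_M(\triangle f)^2\Vol\geq\int_M|\bi f|^2\Vol+2n\int_M Cor(\gr,\gr)\Vol.
\]
Multiplying both sides by the positive factor $\frac{n}{n+1}$ then produces exactly \eqref{ccc}, with $\frac{n}{n+1}$ multiplying the Dirichlet-type term and $\frac{2n^2}{n+1}$ multiplying the $Cor$-integral.

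For the equality assertion the argument is simply run in reverse: since the only inequality used is the dropping of the non-negative term, equality in \eqref{ccc} can hold only if
\[
\int_M|(\nabla^2f)_{[3][0]}|^2\Vol=0.
\]
As the integrand is a non-negative continuous function, this is equivalent to the pointwise vanishing $(\nabla^2f)_{[3][0]}=0$ on all of $M$, i.e. to the vanishing of the trace-free part of the $[3]$-component of the horizontal Hessian, which is the stated characterization.

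There is essentially no analytic obstacle remaining at this stage, since all of the substantive work has been absorbed into the proof of the identity \eqref{p-functC1}, which itself rested on integrating the qc Bochner formula \eqref{bohS}, the curvature--torsion relations \eqref{sixtyfour}, and the $P$-form identities \eqref{p-form} and \eqref{new}. The only point requiring genuine care is the bookkeeping: one must track the constant $\frac{4}{n-1}$ through the manipulations and confirm that $n>1$ under the dimension hypothesis, since it is precisely the strict positivity of this constant that both licenses the discarding of the term and isolates the equality case.
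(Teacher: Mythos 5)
Your proposal is correct and is exactly the argument the paper intends: the theorem is presented as an immediate consequence of the identity \eqref{p-functC1}, obtained by discarding the non-negative term $\frac{4}{n-1}\int_M|(\nabla^2f)_{[3][0]}|^2\,Vol_{\eta}$ (legitimate since $n>1$) and multiplying by $\frac{n}{n+1}$, with the equality case forced by the pointwise vanishing of $(\nabla^2f)_{[3][0]}$. Nothing further is needed.
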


\section{The inequalities, main results}
Denote by $\bar S$ the  average value of the scalar curvature,  $$\bar S=\int_MS\,Vol_{\eta}.$$
For a smooth function $f$ the sub-Laplacian $\triangle f$ is a subelliptic operator. According to \cite{Eg} and \cite{hor}, let  $\varphi$ be the unique solution of the following PDE:
\begin{equation}\label{scal}
\triangle\varphi=S-\bar{S}, \quad \int_M\varphi\,Vol_{\eta}=0.
\end{equation}
We have
\begin{thrm}\label{thrm1}
Let $(M,g,\mathbb{Q})$ be a compact qc manifold of dimension $(4n+3),\quad n>1$.
\begin{itemize}
\item[a)] Suppose  the next positivity condition is satisfied
\begin{equation}\label{posl1}
Lic(X,X)=Sg(X,X)+\frac{2n+3}{2n+1}T^0(X,X)+\frac{2(n+2)(2n-1)}{(n-1)(2n+1)}U(X,X)\ge 0, \quad X\in H.
\end{equation}
Then we have
\begin{multline}\label{inl7}
\int_M(S-\bar S)^2\Vol\le
\frac{n+2}{2n(n-1)(2n+5)^2(2n+1)}\int_M|Ric_{[3][0]}|^2\Vol\\-\frac 2{2n+1}\int_MT^0(e_a,e_b)(\bi^2\varphi)_{[-1][sym]}(e_a,e_b)\Vol
\\=\frac{2(n+2)}{n(n-1)(2n+1)}\int_M|U|^2\Vol-\frac 2{2n+1}\int_MT^0(e_a,e_b)(\bi^2\varphi)_{[-1][sym]}(e_a,e_b)\Vol.
\end{multline}
If the equality in \eqref{inl7} holds then
\begin{multline}\label{inlf7}
\int_M(S-\bar S)^2\Vol=
\frac{n+2}{2n(n-1)(2n+5)^2(2n+1)}\int_M|Ric_{[3][0]}|^2\Vol\\
=\frac{2(n+2)}{n(n-1)(2n+1)}\int_M|U|^2\Vol
\end{multline}
and the qc conformal structure $\bar\eta=\frac{n}{2}e^{-2\varphi}\eta$ will be qc-pseudo-Einstein.
\item[b)] Suppose  the next positivity condition is satisfied
\begin{equation}\label{ccor}
Cor(X,X)= Sg(X,X)+\frac{n+2}{n}T^0(X,X)+\frac{2(n+1)}{n-1}U(X,X)\ge 0, \quad X\in H.
\end{equation}
Then we have
\begin{multline}\label{cinl7}
\int_M(S-\bar S)^2\Vol\le
\frac{(n+2)^2(4n+3)}{4n(n-1)(n+3)(2n+5)^2(2n+1)^2}\int_M|Ric_{[3][0]}|^2\Vol\\-\frac 2{2n+1}\int_MT^0(e_a,e_b)(\bi^2\varphi)_{[-1][sym]}(e_a,e_b)\Vol
\\=\frac{(n+2)^2(4n+3)}{n(n-1)(n+3)(2n+1)^2}\int_M|U|^2\Vol-\frac 2{2n+1}\int_MT^0(e_a,e_b)(\bi^2\varphi)_{[-1][sym]}(e_a,e_b)\Vol.
\end{multline}
If the equality in \eqref{cinl7} holds then
\begin{multline}\label{cinlf7}
\int_M(S-\bar S)^2\Vol=
\frac{(n+2)^2(4n+3)}{4n(n-1)(n+3)(2n+5)^2(2n+1)^2}\int_M|Ric_{[3][0]}|^2\Vol
\\=\frac{(n+2)^2(4n+3)}{n(n-1)(n+3)(2n+1)^2}\int_M|U|^2\Vol
\end{multline}
and the qc conformal structure $\bar\eta=\frac{n(n+3)(2n+1)}{(n+2)(4n+3)}e^{-2\varphi}\eta$ will be qc-pseudo-Einstein.
\end{itemize}
\end{thrm}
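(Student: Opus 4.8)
The plan is to turn $\int_M(S-\bar S)^2\Vol$ into an integral pairing the torsion tensors against the horizontal Hessian of the auxiliary potential $\varphi$ from \eqref{scal}, and then to control that pairing by the $L^2$-estimate on the Hessian supplied by the Proposition. First I would use $\triangle\varphi=S-\bar S$, the integration by parts formula, and $\int_M\triangle\varphi\,\Vol=0$ to write
\begin{equation*}
\int_M(S-\bar S)^2\Vol=\int_M(S-\bar S)\triangle\varphi\,\Vol=\int_M g(\nabla S,\nabla\varphi)\,\Vol.
\end{equation*}
Next I would insert the contracted second Bianchi identity \eqref{Bi2}, which expresses $dS$ through the horizontal divergences of $T^0$ and $U$; substituting $X=\nabla\varphi$ and integrating by parts once more (moving each divergence onto $\nabla\varphi$ via the horizontal one-forms $X\mapsto T^0(X,\nabla\varphi)$ and $X\mapsto U(X,\nabla\varphi)$, whose integrated divergences vanish) yields the exact identity
\begin{multline*}
\int_M(S-\bar S)^2\Vol=-\frac1{2n+1}\int_M T^0(e_a,e_b)(\nabla^2\varphi)_{[-1][sym]}(e_a,e_b)\,\Vol\\-\frac{2(n+2)}{(n-1)(2n+1)}\int_M U(e_a,e_b)(\nabla^2\varphi)_{[3][0]}(e_a,e_b)\,\Vol,
\end{multline*}
where the reduction to the $[-1][sym]$ and $[3][0]$ components uses that $T^0$ is symmetric and lies in the $[-1]$-component while $U$ is symmetric, trace-free and lies in the $[3]$-component.

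The second ingredient is the $L^2$-control of the Hessian. Applying \eqref{p-functf} to $f=\varphi$ and using $\triangle\varphi=S-\bar S$ gives
\begin{multline*}
\frac{2n+1}{2n}\int_M(S-\bar S)^2\Vol=\int_M|(\nabla^2\varphi)_{[-1][sym]}|^2\Vol\\+\frac{n+2}{n-1}\int_M|(\nabla^2\varphi)_{[3][0]}|^2\Vol+(2n+1)\int_M Lic(\nabla\varphi,\nabla\varphi)\Vol.
\end{multline*}
Under the positivity hypothesis \eqref{posl1} the last integral is non-negative, so discarding it together with the non-negative $[-1][sym]$-term yields $\int_M|(\nabla^2\varphi)_{[3][0]}|^2\Vol\le\frac{(n-1)(2n+1)}{2n(n+2)}\int_M(S-\bar S)^2\Vol$.

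To finish part a) I would estimate only the $U$-term of the exact identity, leaving the $T^0$-term explicit. Applying Cauchy--Schwarz and then Young's inequality with a parameter $t>0$ to $\int_M U\cdot(\nabla^2\varphi)_{[3][0]}\,\Vol$, and substituting the Hessian bound, produces $\bigl(1-\tfrac{t}{2n}\bigr)\int_M(S-\bar S)^2\Vol\le-\tfrac1{2n+1}\int_M T^0\cdot(\nabla^2\varphi)_{[-1][sym]}\,\Vol+\tfrac{n+2}{(n-1)(2n+1)t}\int_M|U|^2\Vol$. The decisive choice is $t=n$, making the left coefficient $\tfrac12$; dividing through doubles both terms and gives exactly \eqref{inl7}, the passage to $Ric_{[3][0]}$ following from $U=\frac1{4n+10}Ric_{[3][0]}$ in \eqref{Riccomp}. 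Part b) is identical in structure, with \eqref{p-functC} replacing \eqref{p-functf}: condition \eqref{ccor} gives $\int_M|(\nabla^2\varphi)_{[3][0]}|^2\Vol\le\frac{(n-1)(4n+3)}{4n(n+3)}\int_M(S-\bar S)^2\Vol$, and the same Young argument with the choice $t=\frac{2n(n+3)(2n+1)}{(n+2)(4n+3)}$ (again forcing the left coefficient to $\tfrac12$) produces \eqref{cinl7}.

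The delicate step is the equality discussion. Tracing equality back forces simultaneously: $\int_M Lic(\nabla\varphi,\nabla\varphi)\Vol=0$ and the vanishing of the $[-1][sym]$-Hessian $(\nabla^2\varphi)_{[-1][sym]}=0$ (from discarding the non-negative terms), saturation of Cauchy--Schwarz, i.e. $(\nabla^2\varphi)_{[3][0]}=-\tfrac1t\,U$ pointwise, and saturation of Young, i.e. $t^2\int_M|(\nabla^2\varphi)_{[3][0]}|^2\Vol=\int_M|U|^2\Vol$. Since $(\nabla^2\varphi)_{[-1][sym]}=0$ the explicit $T^0$-term drops out, which is precisely why \eqref{inlf7} (resp. \eqref{cinlf7}) loses that term. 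The proportionality $U=-t\,(\nabla^2\varphi)_{[3][0]}$ is then fed into the qc-conformal transformation law for the trace-free $[3]$-part of the torsion: for a rescaling $\bar\eta=c\,e^{-2\varphi}\eta$ the tensor $\bar U$ is, up to the conformal factor, a fixed linear combination of $U$ and $(\nabla^2\varphi)_{[3][0]}$, and the normalizing constant $c$ (namely $\tfrac n2$ in part a), resp. $\tfrac{n(n+3)(2n+1)}{(n+2)(4n+3)}$ in part b)) is chosen exactly so that this combination is the one annihilated by the equality relation, whence $\bar U=0$, i.e. the rescaled structure is qc-pseudo-Einstein. I expect this last identification---matching the equality proportionality constant with the conformal weight through the transformation formula for $U$---to be the main obstacle, the remainder being careful but routine integration by parts and an optimally tuned Young inequality.
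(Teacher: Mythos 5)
Your proposal is correct and follows essentially the same route as the paper: the identity $\int_M(S-\bar S)^2\Vol=\int_M dS(\nabla\varphi)\Vol$, the contracted Bianchi identity \eqref{Bi2}, integration by parts onto the components of the Hessian, a Young inequality with an optimally tuned parameter (your $t$ is just the paper's $d^{-1}$ rescaled by the factor $2(2n+5)$ relating $U$ to $Ric_{[3][0]}$ via \eqref{Riccomp}, and your choices $t=n$ and $t=\frac{2n(n+3)(2n+1)}{(n+2)(4n+3)}$ reproduce \eqref{d1} and \eqref{d2}), the Hessian bound from \eqref{p-functf}/\eqref{p-functC} under the positivity conditions, and the same equality analysis ending with the conformal transformation law for $U$. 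The only cosmetic difference is that you integrate the $T^0$-divergence by parts at the outset while the paper defers it to the statement, which changes nothing.
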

\begin{proof}  The proof follows the approach of \cite{LT}.
We have
\begin{equation}\label{in1}
\int_M(S-\bar S)^2\Vol=\int_M(S-\bar S)\triangle\varphi\Vol=\int_MdS(\g)\Vol,
\end{equation}
where we used \eqref{scal} and an integration by parts to achieve the last equality.

The equalities \eqref{Riccomp} and \eqref{Bi2} imply
\begin{multline}\label{ric0}
dS(X)=\frac{1}{2n+1}(\bi_{e_a}T^0)(e_a,X)+\frac{2n+4}{(n-1)(2n+1)}(\bi_{e_a}U)(e_a,X)\\=
\frac{n+2}{(n-1)(2n+5)(2n+1)}(\bi_{e_a}Ric_{[3][0]})(e_a,X) +\frac{1}{2n+1}(\bi_{e_a}T^0)(e_a,X)\\=
\frac{1}{2(n+1)(2n+1)}(\bi_{e_a}Ric_{[-1]})(e_a,X)+\frac{2n+4}{(n-1)(2n+1)}(\bi_{e_a}U)(e_a,X).
\end{multline}
Using \eqref{ric0}, we obtain from \eqref{in1} and an integration by parts that
\begin{multline}\label{in2}
\int_M(S-\bar S)^2\Vol=\int_MdS(\g)\Vol\\=
\frac{n+2}{(n-1)(2n+5)(2n+1)}\int_M(\bi_{e_a}Ric_{[3][0]})(e_a,\g)\Vol +\frac{1}{2n+1}\int_M(\bi_{e_a}T^0)(e_a,\g)\Vol\\
=-\frac{n+2}{(n-1)(2n+5)(2n+1)}\int_M(Ric_{[3][0]})(e_a,e_b)(\bi^2\varphi)_{[3][0]}(e_a,e_b)\Vol\\ +\frac 1{2n+1}\int_M(\bi_{e_a}T^0)(e_a,\g)\Vol.
\end{multline}
Applying the Young's inequality 
\begin{equation}\label{yu} 2pq\le dp^2+d^{-1} q^2,
\end{equation} for a constant $d>0$, we get from \eqref{in2}  that
\begin{multline}\label{in3}
\int_M(S-\bar S)^2\Vol\le
\frac{n+2}{2(n-1)(2n+5)(2n+1)}\int_M\Big[d|Ric_{[3][0]}|^2+d^{-1}|(\bi^2\varphi)_{[3][0]}|^2\Big]\Vol\\ +\frac 1{2n+1}\int_M(\bi_{e_a}T^0)(e_a,\g)\Vol.
\end{multline}
First we prove a). The equalities \eqref{p-functf}, \eqref{scal} and the positivity condition \eqref{posl1} help us to obtain from \eqref{in3} the next inequality
\begin{multline}\label{inl4}
\int_M(S-\bar S)^2\Vol\le
\frac{d(n+2)}{2(n-1)(2n+5)(2n+1)}\int_M|Ric_{[3][0]}|^2\Vol\\+\frac{1}{4nd(2n+5)}\int_M(S-\bar S)^2\Vol +\frac 1{2n+1}\int_M(\bi_{e_a}T^0)(e_a,\g)\Vol.
\end{multline}
Thus,  \eqref{inl4} yields
\begin{multline}\label{inl5}
\Big[1-\frac{1}{4nd(2n+5)} \Big]\int_M(S-\bar S)^2\Vol
\le
\frac{d(n+2)}{2(n-1)(2n+5)(2n+1)}\int_M|Ric_{[3][0]}|^2\Vol\\+\frac 1{2n+1}\int_M(\bi_{e_a}T^0)(e_a,\g)\Vol.
\end{multline}
Set 
\begin{equation}\label{d1}
d=\frac{1}{4n^2+10n}
\end{equation}
 into \eqref{inl5} to get \eqref{inl7}, which proves the first part of a).

If we have an equality in \eqref{inl7}, we put the expression of $|(\bi^2f)_{[3][0]}|^2$ from \eqref{p-functf} into \eqref{in3}, taken with the constant  $d$ given by \eqref{d1}, to get
\begin{equation*}
0\le -\int_M\Big[|(\nabla^2\varphi)_{[-1][sym]}|^2+(2n+1)Lic(\g,\g)\Big]\Vol,
\end{equation*}
which, in view of \eqref{posl1}, yields
\begin{equation}\label{eqll2}
\begin{aligned}
&(\nabla^2\varphi)_{[-1][sym]}=0;\\
&Lic(\g,\g)=S|\g|^2+\frac{2n+3}{2n+1}T^0(\g,\g)+\frac{2(n+2)(2n-1)}{(n-1)(2n+1)}U(\g,\g)=0.
\end{aligned}
\end{equation}
The first equality in \eqref{eqll2} and the equality in \eqref{inl7} give \eqref{inlf7}.
Now, the equalities \eqref{inlf7}, \eqref{p-functf} and \eqref{eqll2}, together with \eqref{in1}, imply
\begin{equation}\label{eql3}
\begin{aligned}
&\int_M(S-\bar S)^2\Vol=
\frac{n+2}{2n(n-1)(2n+5)^2(2n+1)}\int_M|Ric_{[3][0]}|^2\Vol;\\
&\int_M|(\bi^2\varphi)_{[3][0]}|^2\Vol=\frac{(2n+1)(n-1)}{2n(n+2)}\int_M(S-\bar S)^2\Vol=\frac{1}{4n^2(2n+5)^2}\int_M|Ric_{[3][0]}|^2\Vol.
\end{aligned}
\end{equation}
On the other hand, \eqref{in2} and \eqref{eql3} yield
\begin{multline}\label{eql4}
\frac{n+2}{2n(n-1)(2n+5)^2(2n+1)}\int_M|Ric_{[3][0]}|^2\Vol\\=
-\frac{n+2}{(n-1)(2n+5)(2n+1)}\int_M(Ric_{[3][0]})(e_a,e_b)(\bi^2\varphi)_{[3][0]}(e_a,e_b)\Vol.
\end{multline}
 The equalities \eqref{eql3} and \eqref{eql4}  imply
 \begin{equation}\label{eqll5}
 (\bi^2\varphi)_{[3][0]}=-\frac{1}{2n(2n+5)}Ric_{[3][0]}=-\frac{1}{n}U.
 \end{equation}
Taking the qc conformal change $\bar\eta=\frac1{4d(2n+5)}e^{-2\varphi}\eta=\frac{n}{2}e^{-2\varphi}\eta$, we get $\bar U=0$, according to \cite[formula (5.6)]{IMV}, taken with $2h=4d(2n+5)e^{2\varphi}=\frac{2}{n}e^{2\varphi}$ (cf. also \cite[formula (5.23)]{IV2}),  and \eqref{eqll5}. Hence, $\overline{Ric}_{[3][0]}=0$ and the qc conformal qc structure $\bar\eta$ is qc-pseudo-Einstein, which completes the proof of a).
 
Now we prove b).  The equalities \eqref{p-functC}, \eqref{scal} and the positivity condition \eqref{ccor} help us to obtain from \eqref{in3} the following inequality
\begin{multline}\label{cinl4}
\int_M(S-\bar S)^2\Vol\le
\frac{d(n+2)}{2(n-1)(2n+5)(2n+1)}\int_M|Ric_{[3][0]}|^2\Vol\\+\frac{(n+2)(4n+3)}{8nd(2n+5)(2n+1)(n+3)}\int_M(S-\bar S)^2\Vol +\frac 1{2n+1}\int_M(\bi_{e_a}T^0)(e_a,\g)\Vol.
\end{multline}
Thus,  \eqref{cinl4} yields
\begin{multline}\label{cinl5}
\Big[1-\frac{(n+2)(4n+3)}{8nd(2n+5)(2n+1)(n+3)} \Big]\int_M(S-\bar S)^2\Vol
\\\le
\frac{d(n+2)}{2(n-1)(2n+5)(2n+1)}\int_M|Ric_{[3][0]}|^2\Vol+\frac 1{2n+1}\int_M(\bi_{e_a}T^0)(e_a,\g)\Vol.
\end{multline}
Set 
\begin{equation}\label{d2}
d=\frac{(n+2)(4n+3)}{4n(2n+5)(2n+1)(n+3)}
\end{equation}
into \eqref{cinl5} to get \eqref{cinl7}, which proves the first part of b).

If we have an equality in \eqref{cinl7}, we put the expression of $|(\bi^2f)_{[3][0]}|^2$ from \eqref{p-functC} into \eqref{in3}, taken with the constant $d$ given by \eqref{d2},  to get
\begin{equation*}
0\le -\int_M\Big[|(\nabla^2\varphi)_{[-1]}|^2+2n Cor(\g,\g)\Big]\Vol,
\end{equation*}
which, in view of \eqref{ccor}, yields 
\begin{equation}\label{ceqll2}
\begin{aligned}
&(\bi^2\varphi)_{[-1]}=0; \\ &Cor(\g,\g)= Sg(\g,\g)+\frac{n+2}{n}T^0(\g,\g)+\frac{2(n+1)}{n-1}U(\g,\g)=0.
\end{aligned}
\end{equation}
The first equality in \eqref{ceqll2} and the equality in \eqref{cinl7} imply \eqref{cinlf7}.
Now, the equalities \eqref{cinlf7}, \eqref{p-functC} and \eqref{ceqll2} together with \eqref{in1} imply
\begin{equation}\label{ceql3}
\begin{aligned}
&\int_M(S-\bar S)^2\Vol=
\frac{(n+2)^2(4n+3)}{4n(n-1)(n+3)(2n+5)^2(2n+1)^2}\int_M|Ric_{[3][0]}|^2\Vol;\\
&\int_M|\bi^2\varphi_{[3][0]}|^2\Vol=\frac{(4n+3)(n-1)}{4n(n+3)}\int_M(S-\bar S)^2\Vol\\&\qquad\qquad\qquad\qquad\quad=\frac{(n+2)^2(4n+3)^2}{16n^2(n+3)^2(2n+5)^2(2n+1)^2}\int_M|Ric_{[3][0]}|^2\Vol.
\end{aligned}
\end{equation}
On the other hand, \eqref{in2} and \eqref{ceql3} yield
\begin{multline}\label{ceql4}
\frac{(n+2)^2(4n+3)}{4n(n-1)(n+3)(2n+5)^2(2n+1)^2}\int_M|Ric_{[3][0]}|^2\Vol\\=
-\frac{n+2}{(n-1)(2n+5)(2n+1)}\int_M(Ric_{[3][0]})(e_a,e_b)(\bi^2\varphi)_{[3][0]}(e_a,e_b)\Vol.
\end{multline}
 The equalities \eqref{ceql3} and \eqref{ceql4}  imply
 \begin{equation}\label{ceqll5}
 (\bi^2\varphi)_{[3][0]}=-\frac{(n+2)(4n+3)}{4n(n+3)(2n+5)(2n+1)}Ric_{[3][0]}=-\frac{(n+2)(4n+3)}{2n(n+3)(2n+1)}U.
 \end{equation} 
 Take the qc conformal change $\bar\eta=\frac1{4d(2n+5)}e^{-2\varphi}\eta=\frac{n(n+3)(2n+1)}{(n+2)(4n+3)}e^{-2\varphi}\eta$ gives $\bar U=0$, according to \cite[formula (5.6)]{IMV}, taken with $2h=4d(2n+5)e^{2\varphi}=\frac{(n+2)(4n+3)}{n(n+3)(2n+1)}e^{2\varphi}$ (cf. also \cite[formula (5.23)]{IV2}),  and \eqref{ceqll5}. Hence, $\overline{Ric}_{[3][0]}=0$ and the qc conformal qc structure $\bar\eta$ is qc-pseudo-Einstein, which completes the proof of b).
\end{proof}
An integration by parts of the last term in \eqref{inl7} yields the following
\begin{cor}\label{coe1}
In addition to the same conditions as in Theorem~\ref{thrm1}, we assume
\begin{equation}\label{to}
(\bi_{e_a}\bi_{e_b}T^0)(e_a,e_b)=0.
\end{equation}
\begin{itemize}
\item[a)] If $Lic(X,X)\ge 0$ then
\begin{equation*}
\int_M(S-\bar S)^2\Vol\le
\frac{n+2}{2n(n-1)(2n+5)^2(2n+1)}\int_M|Ric_{[3][0]}|^2\Vol
=\frac{2(n+2)}{n(n-1)(2n+1)}\int_M|U|^2\Vol.
\end{equation*}
\item[b)] If $Cor(X,X)\ge 0$ then
\begin{multline*}
\int_M(S-\bar S)^2\Vol\le
\frac{(n+2)^2(4n+3)}{4n(n-1)(n+3)(2n+5)^2(2n+1)^2}\int_M|Ric_{[3][0]}|^2\Vol
\\=\frac{(n+2)^2(4n+3)}{n(n-1)(n+3)(2n+1)^2}\int_M|U|^2\Vol.
\end{multline*}
\end{itemize}
In both cases, if M is qc-pseudo-Einstein then the qc scalar curvature $S$ is a constant.
\end{cor}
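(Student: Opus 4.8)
The plan is to start from the two inequalities \eqref{inl7} and \eqref{cinl7} already proved in Theorem~\ref{thrm1} and to show that, under the extra hypothesis \eqref{to}, the common torsion term
\begin{equation*}
-\frac 2{2n+1}\int_MT^0(e_a,e_b)(\bi^2\varphi)_{[-1][sym]}(e_a,e_b)\Vol
\end{equation*}
vanishes. Once this term is removed, the displayed inequalities in a) and b) are exactly what remains, and the final constancy statement follows immediately.

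First I would note that $T^0$ is a symmetric $[-1]$-tensor, and that both the $Sp(n)Sp(1)$-decomposition (eigenspaces of the self-adjoint Casimir $\Upsilon$) and the symmetric/antisymmetric splitting are $g$-orthogonal. Since by \eqref{boh2} the antisymmetric part of the horizontal Hessian lies entirely in the $[-1]$-component while the $[3]$-part is symmetric, contracting $T^0$ against the \emph{full} Hessian automatically selects the $[-1][sym]$-component:
\begin{equation*}
\int_MT^0(e_a,e_b)(\bi^2\varphi)_{[-1][sym]}(e_a,e_b)\Vol=\int_MT^0(e_a,e_b)\,\bi^2\varphi(e_a,e_b)\Vol,
\end{equation*}
the $[3]$-part being killed because $T^0\in[-1]$ and the antisymmetric part because $T^0$ is symmetric.

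Next I would integrate by parts twice, using the divergence identity $\int_M\nabla^*\sigma\,\Vol=0$ valid on the compact $M$. Taking $\sigma$ to be the horizontal one-form $\sigma(Y)=T^0(\g,Y)$ and writing $\bi^2\varphi(e_a,e_b)=(\bi_{e_a}d\varphi)(e_b)$, the first integration by parts gives
\begin{equation*}
\int_MT^0(e_a,e_b)\,\bi^2\varphi(e_a,e_b)\Vol=-\int_M(\bi_{e_b}T^0)(e_a,e_b)\,d\varphi(e_a)\Vol,
\end{equation*}
where I use the symmetry of $T^0$ to pass between the two orderings of the Hessian arguments. Setting $\tau(X)=(\bi_{e_b}T^0)(X,e_b)$ and integrating by parts a second time to move the derivative off $\varphi$, I obtain
\begin{equation*}
-\int_M\tau(\g)\Vol=\int_M\varphi\,\nabla^*\tau\Vol=-\int_M\varphi\,(\bi_{e_a}\bi_{e_b}T^0)(e_a,e_b)\Vol=0,
\end{equation*}
the last equality being precisely hypothesis \eqref{to}. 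Hence the torsion term is zero and a), b) reduce to the stated inequalities. For the final claim, qc-pseudo-Einstein means $U=0$, so $Ric_{[3][0]}=(4n+10)U=0$ by \eqref{Riccomp}; the right-hand sides of both inequalities then vanish, forcing $\int_M(S-\bar S)^2\Vol\le 0$, i.e. $S\equiv\bar S$ is constant.

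The main obstacle is purely the index bookkeeping in the two integrations by parts: one must check that the symmetry of $T^0$ genuinely lets the Hessian enter un-symmetrized, and that the twice-contracted second covariant derivative produced in the second step matches, term for term, the expression $(\bi_{e_a}\bi_{e_b}T^0)(e_a,e_b)$ appearing in \eqref{to}. Because $M$ is compact there are no boundary contributions, so beyond this verification the argument is routine.
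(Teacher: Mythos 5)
Your argument is correct and is exactly the route the paper takes: the paper's entire justification for Corollary~\ref{coe1} is the phrase ``an integration by parts of the last term in \eqref{inl7} yields the following'', which is precisely your two integrations by parts reducing the torsion term to $\int_M\varphi\,(\nabla_{e_a}\nabla_{e_b}T^0)(e_a,e_b)\Vol=0$ via \eqref{to}, followed by the observation that $U=0$ forces $S\equiv\bar S$. (There is a harmless sign slip in your final chain --- with the paper's convention $\nabla^*\sigma=-\nabla\sigma(e_a,e_a)$ one has $-\int_M\tau(\g)\Vol=-\int_M\varphi\,\nabla^*\tau\Vol$ rather than $+\int_M\varphi\,\nabla^*\tau\Vol$ --- but since the quantity vanishes either way, the conclusion is unaffected.)
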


Further, in a  similar way as the proof of Theorem~\ref{thrm1}, we obtain
\begin{thrm}\label{thrm2}
Let $(M,g,\mathbb{Q})$ be a compact qc manifold of dimension $(4n+3),\quad n>1$.
\begin{itemize}
\item[a)] Suppose  the  positivity condition \eqref{posl1} holds. Then we have
\begin{multline}\label{inl7n}
\int_M(S-\bar S)^2\Vol\le
\frac{1}{8n(n+1)^2(2n+1)}\int_M|Ric_{[-1]}|^2\Vol\\+\frac{4(n+2)}{(n-1)(2n+1)}\int_M(\bi_{e_a}U)(e_a,\g)\Vol
\\=\frac{1}{2n(2n+1)}\int_M|T^0|^2\Vol-\frac{4(n+2)}{(n-1)(2n+1)}\int_MU(e_a,e_b)(\bi^2\varphi)_{[3][0]}(e_a,e_b)\Vol.
\end{multline}
If the equality holds then
\begin{equation}\label{inlf7n}
\int_M(S-\bar S)^2\Vol=
\frac{1}{8n(n+1)^2(2n+1)}\int_M|Ric_{[-1]}|^2\Vol
=\frac{1}{2n(2n+1)}\int_M|T^0|^2\Vol.
\end{equation}
\item[b)] Suppose  the  positivity condition \eqref{ccor} holds. Then we have
\begin{multline}\label{inl7nc}
\int_M(S-\bar S)^2\Vol\le\frac{4n+3}{16n(n+1)^2(2n+1)^2}\int_M|Ric_{[-1]}|^2\Vol\\+\frac{4(n+2)}{(n-1)(2n+1)}\int_M(\bi_{e_a}U)(e_a,\g)\Vol
\\=\frac{4n+3}{4n(2n+1)^2}\int_M|T^0|^2\Vol-\frac{4(n+2)}{(n-1)(2n+1)}\int_MU(e_a,e_b)(\bi^2\varphi)_{[3][0]}(e_a,e_b)\Vol.
\end{multline}
If the equality holds then
\begin{equation}\label{inlf7nc}
\int_M(S-\bar S)^2\Vol=
\frac{4n+3}{16n(n+1)^2(2n+1)^2}\int_M|Ric_{[-1]}|^2\Vol
=\frac{4n+3}{4n(2n+1)^2}\int_M|T^0|^2\Vol.
\end{equation}
\end{itemize}
\end{thrm}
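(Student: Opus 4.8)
The plan is to run the argument of Theorem~\ref{thrm1} almost verbatim, but to insert the \emph{dual} expression for $dS$ from \eqref{ric0}, thereby interchanging the roles of the $[3][0]$- and $[-1]$-components throughout. As in \eqref{in1}, I would begin from
\[
\int_M(S-\bar S)^2\Vol=\int_MdS(\g)\Vol,
\]
using \eqref{scal} and integration by parts. In place of the second line of \eqref{ric0} (which drives the proof of Theorem~\ref{thrm1}), I would substitute its third line,
\[
dS(X)=\frac{1}{2(n+1)(2n+1)}(\bi_{e_a}Ric_{[-1]})(e_a,X)+\frac{2(n+2)}{(n-1)(2n+1)}(\bi_{e_a}U)(e_a,X),
\]
and integrate by parts in the first summand. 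Because $Ric_{[-1]}=2(n+1)T^0$ is symmetric and lies in the $[-1]$-component, the pairing with $\bi^2\varphi$ retains only $(\bi^2\varphi)_{[-1][sym]}$, giving
\[
\int_M(S-\bar S)^2\Vol=-\frac{1}{2(n+1)(2n+1)}\int_MRic_{[-1]}(e_a,e_b)(\bi^2\varphi)_{[-1][sym]}(e_a,e_b)\Vol+\frac{2(n+2)}{(n-1)(2n+1)}\int_M(\bi_{e_a}U)(e_a,\g)\Vol.
\]

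Next I would apply Young's inequality \eqref{yu} to the first integral with a free parameter $d>0$, producing $d|Ric_{[-1]}|^2$ and $d^{-1}|(\bi^2\varphi)_{[-1][sym]}|^2$. The quantity that previously needed controlling, $|(\bi^2\varphi)_{[3][0]}|^2$, is now replaced by $|(\bi^2\varphi)_{[-1][sym]}|^2$, and the essential input is a bound of $\int_M|(\bi^2\varphi)_{[-1][sym]}|^2\Vol$ by $\int_M(S-\bar S)^2\Vol$. For part a) this is immediate from the $P$-function identity \eqref{p-functf}: since $\triangle\varphi=S-\bar S$, discarding the non-negative term $\tfrac{n+2}{n-1}|(\bi^2\varphi)_{[3][0]}|^2$ and invoking the positivity \eqref{posl1} gives
\[
\int_M|(\bi^2\varphi)_{[-1][sym]}|^2\Vol\le\frac{2n+1}{2n}\int_M(S-\bar S)^2\Vol.
\]
For part b) I would use \eqref{p-functC} with the positivity \eqref{ccor}, discard $\tfrac{n+3}{n-1}|(\bi^2\varphi)_{[3][0]}|^2$, and use $|(\bi^2\varphi)_{[-1][sym]}|^2\le|(\bi^2\varphi)_{[-1]}|^2$ to obtain the same estimate with the constant $\tfrac{4n+3}{4n}$ in place of $\tfrac{2n+1}{2n}$.

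Substituting these bounds, the coefficient multiplying $\int_M(S-\bar S)^2\Vol$ on the left becomes $1-\tfrac{1}{8n(n+1)d}$ in case a) and $1-\tfrac{4n+3}{16n(n+1)(2n+1)d}$ in case b). Choosing $d=\tfrac{1}{4n(n+1)}$ in a) and $d=\tfrac{4n+3}{8n(n+1)(2n+1)}$ in b) makes this coefficient equal $\tfrac12$; dividing through then yields exactly \eqref{inl7n} and \eqref{inl7nc}, with the $|T^0|^2$-forms following from $|Ric_{[-1]}|^2=4(n+1)^2|T^0|^2$ and an integration by parts of the $U$-term against $(\bi^2\varphi)_{[3][0]}$. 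For the equality cases I would trace the inequalities backward: equality forces the discarded Hessian terms and $Lic(\g,\g)$ (resp. $Cor(\g,\g)$) to vanish and Young's inequality to be tight, i.e. $(\bi^2\varphi)_{[3][0]}=0$ (resp. $(\bi^2\varphi)_{[-1]}=0$) together with $|(\bi^2\varphi)_{[-1][sym]}|=d|Ric_{[-1]}|$; feeding these back reproduces \eqref{inlf7n} and \eqref{inlf7nc}.

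The main obstacle is organizational rather than conceptual: one must keep rigorous track of which invariant piece of the Hessian survives each integration by parts --- here the symmetric $[-1]$-part rather than the $[3][0]$-part of Theorem~\ref{thrm1} --- and pair each $P$-function identity with the matching positivity hypothesis, so that in every case precisely one non-negative Hessian term remains available to be discarded while the other is controlled by $(S-\bar S)^2$. The only genuinely delicate point is verifying that the two choices of $d$ collapse the cross-term coefficient to $\tfrac12$, which is what converts the raw Young estimate into the sharp constants appearing in \eqref{inl7n} and \eqref{inl7nc}.
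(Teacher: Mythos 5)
Your proposal is essentially the paper's own proof: the same substitution of the third line of \eqref{ric0}, the same integration by parts retaining $(\bi^2\varphi)_{[-1][sym]}$, Young's inequality with the same two choices of $d$, and the same use of \eqref{p-functf} (resp. \eqref{p-functC}) with the matching positivity condition to absorb the Hessian term into $\int_M(S-\bar S)^2\Vol$. The one slip is in your equality analysis for part b): the Hessian term discarded from \eqref{p-functC} is $\frac{n+3}{n-1}|(\bi^2\varphi)_{[3][0]}|^2$, so equality forces $(\bi^2\varphi)_{[3][0]}=0$ (and $Cor(\g,\g)=0$), not $(\bi^2\varphi)_{[-1]}=0$ as you state; this matters because it is precisely $(\bi^2\varphi)_{[3][0]}=0$ that kills the $U$-term $\int_M U(e_a,e_b)(\bi^2\varphi)_{[3][0]}(e_a,e_b)\Vol$ and yields \eqref{inlf7nc}, exactly as in the paper's \eqref{eql2c}.
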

\begin{proof}
Using \eqref{ric0}, we obtain from \eqref{in1} and an integration by parts that
\begin{multline}\label{in2n}
\int_M(S-\bar S)^2\Vol=\int_MdS(\g)\Vol\\=
\frac{1}{2(n+1)(2n+1)}\int_M\Big[(\bi_{e_a}Ric_{[-1]})(e_a,\g) +\frac{4(n+1)(n+2)}{n-1}(\bi_{e_a}U)(e_a,\g) \Big]\Vol\\=
-\frac{1}{2(n+1)(2n+1)}\int_M(Ric_{[-1]})(e_a,e_b)(\bi^2\varphi)_{[-1][sym]}(e_a,e_b) \Vol\\+\frac{2(n+2)}{(n-1)(2n+1)}\int_M(\bi_{e_a}U)(e_a,\g)\Vol.
\end{multline}
First we prove a).
Applying  the Young's inequality \eqref{yu} for a constant $d>0$, together with \eqref{p-functf}, we get from \eqref{in2n} 
\begin{multline}\label{in3n}
\int_M(S-\bar S)^2\Vol\le
\frac{1}{4(n+1)(2n+1)}\Big[d\int_M|Ric_{[-1]}|^2\Vol+d^{-1}\int_M|(\bi^2\varphi)_{[-1][sym]}|^2\Vol\Big]\\ +\frac{2(n+2)}{(n-1)(2n+1)}\int_M(\bi_{e_a}U)(e_a,\g)\Vol\\=\frac{d}{4(n+1)(2n+1)}\int_M|Ric_{[-1]}|^2\Vol+\frac{2(n+2)}{(n-1)(2n+1)}\int_M(\bi_{e_a}U)(e_a,\g)\Vol\\+\frac1{2d}\int_M\Big[\frac{1}{4n(n+1)}(\triangle\varphi)^2 -\frac{n+2}{2(n^2-1)(2n+1)}|(\bi^2\varphi)_{[3][0]}|^2-\frac1{2(n+1)}Lic(\g,\g)\Big]\Vol.
\end{multline}
The equalities \eqref{scal}, the positivity condition \eqref{posl1} and  \eqref{in3n} yield
\begin{multline}\label{inl4n}
\int_M(S-\bar S)^2\Vol\le
\frac{d}{4(n+1)(2n+1)}\int_M|Ric_{[-1]}|^2\Vol\\+\frac{1}{8nd(n+1)}\int_M(S-\bar S)^2\Vol +\frac{2(n+2)}{(n-1)(2n+1)}\int_M(\bi_{e_a}U)(e_a,\g)\Vol.
\end{multline}
Thus,  \eqref{inl4n} yields
\begin{multline}\label{inl5n}
\Big[1-\frac{1}{8nd(n+1)} \Big]\int_M(S-\bar S)^2\Vol\le
\frac{d}{4(n+1)(2n+1)}\int_M|Ric_{[-1]}|^2\Vol\\+\frac{2(n+2)}{(n-1)(2n+1)}\int_M(\bi_{e_a}U)(e_a,\g)\Vol.
\end{multline}
Set $d=\frac{1}{4n(n+1)}$ into \eqref{inl5n} to get \eqref{inl7n}.

If we have an equality in \eqref{inl7n}, we use  \eqref{p-functf} to get from \eqref{in3n}   that
\begin{equation*}
\begin{split}
0\le
-\int_M\Big[\frac{n+2}{n-1}|(\nabla^2\varphi)_{[3][0]}|^2+(2n+1)Lic(\g,\g)\Big]\Vol,
\end{split}
\end{equation*}
which, in view of \eqref{posl1}, yields
\begin{equation}\label{eql2}
\begin{split}
(\nabla^2\varphi)_{[3][0]}=0;\quad
S|\nabla\varphi|^2+\frac{2n+3}{2n+1}T^0(\nabla\varphi,\nabla\varphi)+\frac{2(n+2)(2n-1)}{(n-1)(2n+1)}U(\nabla\varphi,\nabla\varphi)=0.
\end{split}
\end{equation}
Now, the equalities \eqref{inl7n} 
 and \eqref{eql2} 
  imply \eqref{inlf7n}, which proves a).
  
 For  b),  using   the Young's inequality \eqref{yu} for a constant $d>0$ together with \eqref{p-functC}, we get from \eqref{in2n} 
\begin{multline}\label{in3nc}
\int_M(S-\bar S)^2\Vol\le
\frac{1}{4(n+1)(2n+1)}\Big[d\int_M|Ric_{[-1]}|^2\Vol+d^{-1}\int_M|(\bi^2\varphi)_{[-1]}|^2\Vol\Big]\\ +\frac{2(n+2)}{(n-1)(2n+1)}\int_M(\bi_{e_a}U)(e_a,\g)\Vol\\=\frac{d}{4(n+1)(2n+1)}\int_M|Ric_{[-1]}|^2\Vol+\frac{2(n+2)}{(n-1)(2n+1)}\int_M(\bi_{e_a}U)(e_a,\g)\Vol\\+\frac{1}{4d(n+1)(2n+1)}\int_M\Big[\frac{4n+3}{4n}(\triangle\varphi)^2 -\frac{n+3}{n-1}|(\bi^2\varphi)_{[3][0]}|^2-2nCor(\g,\g)\Big]\Vol.
\end{multline}
The equality \eqref{scal},  the positivity condition \eqref{ccor} and \eqref{in3nc} imply
\begin{multline}\label{inl5nc}
\Big[1-\frac{4n+3}{16nd(n+1)(2n+1)} \Big]\int_M(S-\bar S)^2\Vol\le
\frac{d}{4(n+1)(2n+1)}\int_M|Ric_{[-1]}|^2\Vol\\+\frac{2(n+2)}{(n-1)(2n+1)}\int_M(\bi_{e_a}U)(e_a,\g)\Vol.
\end{multline}
Set $d=\frac{4n+3}{8n(n+1)(2n+1)}$ into \eqref{inl5nc} to get \eqref{inl7nc}.

If we have an equality in \eqref{inl7nc}, we use  \eqref{p-functC} to get from \eqref{in3nc}   that
\begin{equation*}
\begin{split}
0\le
-\int_M\Big[\frac{n+3}{n-1}|(\nabla^2\varphi)_{[3][0]}|^2+2nCor(\g,\g)\Big]\Vol,
\end{split}
\end{equation*}
which, in view of \eqref{ccor}, yields
\begin{equation}\label{eql2c}
\begin{split}
(\nabla^2\varphi)_{[3][0]}=0;\quad
Cor(\g,\g)= Sg(\g,\g)+\frac{n+2}{n}T^0(\g,\g)+\frac{2(n+1)}{n-1}U(\g,\g)=0.
\end{split}
\end{equation}
Now, the equalities \eqref{inl7nc} 
 and \eqref{eql2c} 
  imply \eqref{inlf7nc}, which completes the proof.
\end{proof}
An integration by parts of the last term in \eqref{inl7n} and in \eqref{inl7nc}, respectively, yields the following
\begin{cor}\label{coe1c}
In addition to the same conditions as in Theorem~\ref{thrm2}, we assume
\begin{equation*}\label{u}
(\bi_{e_a}\bi_{e_b}U)(e_a,e_b)=0.
\end{equation*}
\begin{itemize}
\item[a)] If $Lic(X,X)\ge 0$ then
\begin{equation*}
\int_M(S-\bar S)^2\Vol\le
\frac{1}{8n(n+1)^2(2n+1)}\int_M|Ric_{[-1]}|^2\Vol
=\frac{1}{2n(2n+1)}\int_M|T^0|^2\Vol.
\end{equation*}
\item[b)] If $Cor(X,X)\ge 0$ then
\begin{equation*}
\int_M(S-\bar S)^2\Vol\le
\frac{4n+3}{16n(n+1)^2(2n+1)^2}\int_M|Ric_{[-1]}|^2\Vol
=\frac{4n+3}{4n(2n+1)^2}\int_M|T^0|^2\Vol.
\end{equation*}
\end{itemize}
In both cases, if M is qc-nearly-Einstein then the qc scalar curvature $S$ is a constant.
\end{cor}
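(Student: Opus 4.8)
The plan is to read this off Theorem~\ref{thrm2}: the two inequalities asserted here are precisely \eqref{inl7n} and \eqref{inl7nc} with their final error term deleted, so the entire content is to show that the extra hypothesis $(\bi_{e_a}\bi_{e_b}U)(e_a,e_b)=0$ forces
\[
\int_M(\bi_{e_a}U)(e_a,\g)\Vol=0.
\]
Once this is established, the right-hand sides of \eqref{inl7n} and \eqref{inl7nc} collapse to the stated bounds in terms of $|Ric_{[-1]}|^2$, and the identification with the $|T^0|^2$ integrals is just \eqref{Riccomp}, which gives the pointwise relation $|T^0|^2=\tfrac1{4(n+1)^2}|Ric_{[-1]}|^2$; a direct check shows this converts the $|Ric_{[-1]}|^2$-coefficient into the $|T^0|^2$-coefficient in both a) and b).

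To kill the error term I would integrate by parts once more, as the statement suggests. Write $\tau$ for the horizontal $1$-form $\tau(X)=(\bi_{e_a}U)(e_a,X)$, the divergence of $U$ in its first slot, so that the integrand is $\tau(\g)=g(\tau^\sharp,\g)$. Applying the integration-by-parts identity $\int_M(\nabla^*\sigma)\Vol=0$ to the $1$-form $\sigma=\varphi\,\tau$ yields
\[
\int_M\tau(\g)\Vol=\int_M\varphi\,(\nabla^*\tau)\Vol,\qquad \nabla^*\tau=-(\bi_{e_b}\bi_{e_a}U)(e_a,e_b),
\]
where in the last equality I use that covariant differentiation commutes with the frame contraction, so $(\bi_{e_b}\tau)(e_b)=(\bi_{e_b}\bi_{e_a}U)(e_a,e_b)$. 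The one point that must be checked with care is that this double divergence coincides with the one in the hypothesis: relabelling the summation $a\leftrightarrow b$ and using that $U$, and hence $\bi^2U$, is symmetric in the two arguments of $U$ gives $(\bi_{e_b}\bi_{e_a}U)(e_a,e_b)=(\bi_{e_a}\bi_{e_b}U)(e_a,e_b)=0$. This is a pure index rearrangement, not a commutation of derivatives, so no Ricci identity and no curvature correction enters. Hence $\nabla^*\tau=0$, the error term vanishes, and a) follows from \eqref{inl7n} while b) follows from \eqref{inl7nc}.

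The only genuine obstacle is therefore this bookkeeping step—matching the order of the two covariant derivatives in $\nabla^*\tau$ to the hypothesis—which is settled entirely by the symmetry of $U$. For the concluding remark about the qc-nearly-Einstein case I would simply set $T^0=0$: then $\int_M|T^0|^2\Vol=0$, so in each of a) and b) the inequality reads $\int_M(S-\bar S)^2\Vol\le 0$, which forces $S\equiv\bar S$, i.e.\ the qc scalar curvature $S$ is constant.
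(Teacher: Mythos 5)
Your argument is correct and is exactly the route the paper takes: the paper justifies the corollary with the single remark that ``an integration by parts of the last term'' in \eqref{inl7n} and \eqref{inl7nc} yields the result, and your computation (moving both divergences onto $U$, matching the index order via the symmetry of $U$ with no curvature terms arising, then converting $|Ric_{[-1]}|^2$ to $4(n+1)^2|T^0|^2$ via \eqref{Riccomp}) is precisely the omitted detail, with the correct signs and constants. The concluding qc-nearly-Einstein observation is also handled as the paper intends.
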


\end{document}